\theoremstyle{plain}
\newtheorem*{acknowledgement}{Acknowledgement}
\newtheorem{corollary}{\bf Corollary}
\newtheorem{lemma}{\bf Lemma}
\newtheorem{theorem}{\bf Theorem}
\theoremstyle{definition}
\newcommand{\ric}{\mathring{Ric}}
\renewcommand{\div}{{\rm div}}
\numberwithin{equation}{section}
\title[Critical metrics of the volume functional]{Isoperimetric inequality and Weitzenb\"ock type formula for critical metrics of the volume}
\author{H. Baltazar}
\author{R. Di\'ogenes}
\author{E. Ribeiro Jr.}
\address[H. Baltazar]{Universidade Federal do Piau\'{i} - UFPI, Departamento de Matem\'{a}tica,  64049-550, Te\-re\-si\-na, Piau\'{\i}, Brazil.}
\email{halyson@ufpi.edu.br}
\address[R. Di\'ogenes]{UNILAB, Instituto de Ci\^encias Exatas e da Natureza, 62785-000, Acarape, Cear\'a, Brazil.}
\email{rafaeldiogenes@unilab.edu.br}
\address[E. Ribeiro Jr]{Universidade Federal do Cear\'a - UFC, Departamento  de Matem\'atica, Campus do Pici, Av. Humberto Monte, Bloco 914,
60455-760, Fortaleza, Cear\'a, Brazil.}
\email{ernani@mat.ufc.br}
\thanks{H. Baltazar was partially supported by CNPq/Brazil and FAPEPI/Brazil}
\thanks{E. Ribeiro Jr was partially supported by grants from CNPq/Brazil (Grant: 303091/2015-0), PRONEX-FUNCAP/CNPq/Brazil and CAPES/ Brazil - Finance Code 001}
\subjclass[2010]{Primary 53C25, 53C20, 53C21; Secondary 53C65}
\keywords{Volume functional; critical metrics; isoperimetric inequality; Weitzenb\"ock formula}
\date{December 1, 2017}
\begin{document}

\newcommand{\spacing}[1]{\renewcommand{\baselinestretch}{#1}\large\normalsize}
\spacing{1.2}

\begin{abstract}
We provide an isoperimetric inequality for critical metrics of the volume functional with nonnegative scalar curvature on compact manifolds with boundary. In addition, we establish a Weitzenb\"ock type formula for cri\-ti\-cal metrics of the volume functional on four-dimensional manifolds. As an application, we obtain a classification result for such metrics.
\end{abstract}

\maketitle

\section{Introduction}
\label{intro}

A classical topic in Riemannian geometry is to find canonical metrics on a given manifold $M^n.$ A promising way for that purpose is that of critical metrics of the Riemannian functionals, as for instance, the total scalar curvature functional and the volume functional. Einstein and Hilbert proved that the critical points of the total scalar curvature functional restricted to the set of smooth Riemannian structures on $M^n$ of unitary volume are Einstein (cf. Theorem 4.21 in \cite{besse}), and this result stimulated several interesting works. In this spirit, Miao and Tam \cite{miaotam,miaotamTAMS}  studied variational properties of the volume functional constrained to the space of metrics of constant scalar curvature on a given compact manifold with boundary. Indeed, volume is one of the natural geometric quantities used to study geometrical and topological properties of a Riemannian manifold.

In order to make our approach more understandable, we need to recall some terminology. Let $(M^{n},\,g)$ be a connected compact Riemannian manifold with dimension $n$ at least three and smooth boundary $\partial M.$ According to \cite{BDR,BDRR,CEM,miaotam} and \cite{miaotamTAMS}, we say that $g$ is, for simplicity, a {\it Miao-Tam critical metric} if there is a nonnegative smooth function $f$ on $M^n$ such that $f^{-1}(0)=\partial M$ and satisfies the overdetermined-elliptic system
\begin{equation}\label{eqMiaoTam1}
\mathfrak{L}_{g}^{*}(f)=-(\Delta f)g+Hess\, f-f Ric=g,
\end{equation} where $\mathfrak{L}_{g}^{*}$ is the formal $L^{2}$-adjoint of the linearization of the scalar curvature ope\-ra\-tor $\mathfrak{L}_{g},$ which plays a fundamental role in problems related to prescribing the scalar curvature function. Moreover, $Ric,$ $\Delta$ and $Hess$ stand, respectively, for the Ricci tensor, the Laplacian operator and the Hessian form on $M^n.$ Further, we will consider $Hess=\nabla^{2}$ on functions.

It is proved in \cite{miaotam,miaotamTAMS} that these critical metrics arise as critical points of the volume functional on $M^n$ when restricted to the class of metrics $g$ with pres\-cribed constant scalar curvature such that $g_{|_{T \partial M}}=h$ for a pres\-cribed Riemannian metric $h$  on the boundary. Corvino, Eichmair and Miao \cite{CEM} studied the modified problem of finding stationary points for the volume functional on the space of metrics whose scalar curvature is equal to a given constant. In particular, they were able to prove a deformation result which suggests that the information of scalar curvature is not sufficient in giving volume comparison. The classification problem for critical me\-trics of the volume functional is important and relevant in understanding the influence of the scalar curvature in controlling the volume of a given manifold. For more details on such a subject, we refer the reader to \cite{baltazar,BDR,BDRR,CEM,Kim,miaotam,miaotamTAMS,yuan} and references therein.

Isoperimetric problems are classical objects of study in mathematics. Generally speaking, the isoperimetric inequality is a geometric inequality involving the surface area of a set and its volume. The isoperimetric inequality on the plane states that the length $L$ of a closed curve on $\mathbb{R}^{2}$ and the area $A$ of the planar region that it encloses must to satisfy
$$L^{2}\geq 4\pi A.$$
Moreover, the equality holds if and only if the curve is a circle. In $\mathbb{R}^{n},$ the classical isoperimetric inequality asserts that if $M\subset\mathbb{R}^{n}$ is a
compact domain with smooth boundary $\partial M,$ then
\begin{equation}
\frac{|\partial M|}{|\partial\mathbb{B}_{1}^{n}|}\geq \frac{{\rm Vol}(M)^{\frac{n-1}{n}}}{{\rm Vol}(\mathbb{B}_{1}^{n})^{\frac{n-1}{n}}},
\end{equation}
where $|\partial M|$ denotes the $(n-1)$-dimensional volume of $\partial M$ and ${\rm Vol}(M)$ is the volume of $M.$ For sufficiently smooth domains, the $n$-dimensional isoperimetric inequality is equivalent to the Sobolev inequality on $\Bbb{R}^n$ (cf. \cite{Ma}). We refer to \cite{Ros} for a general discussion on this topic. 

Inspired by a classical result obtained in  \cite{BGH} and \cite{Shen}, it has been shown by Batista, Ranieri and the last two named authors \cite{BDRR} that the boundary $\partial M$ of a compact three-dimensional oriented Miao-Tam critical metric $(M^3,\,g)$ with connected boundary and nonnegative scalar curvature must be a $2$-sphere whose area satisfies the inequality $$|\partial M|\leq\frac{4\pi}{C},$$ where $C$ is a positive constant. Moreover, the equality holds if and only if $M^{3}$ is isometric to a geodesic ball in $\Bbb{R}^3$ or $\Bbb{S}^3.$ This result also holds for negative scalar curvature, provided that the mean curvature of the boundary satisfies $H > 2,$ as was proven in \cite{BLF}; see also \cite{BS}. Another upper bound estimate for the area of the boundary was obtained by Corvino, Eichmair and Miao (cf. \cite{CEM}, Proposition 2.5).

In the spirit of these quoted results and stimulated by the isoperimetric problem, we provide a lower bound estimate for the area of the boundary of a compact manifold  satisfying (\ref{eqMiaoTam1}). More precisely, we have established the following result.

\begin{theorem}\label{thmMainA}
Let $(M^n,\,g,\,f)$ be a compact, oriented, Miao-Tam critical metric with connected boundary $\partial M.$ Then the area of the boundary $|\partial M|$ must satisfy
\begin{equation}
|\partial M|\geq\frac{n+2}{2n(n-1)^{2}}H^{3}C_{R},
\end{equation} where $H$ is the mean curvature of $\partial M$ with respect to the outward unit normal and $C_{R}$ is a positive constant given by
$$C_{R}=\int_{M}(R^{2}f^{3}+3nRf^{2}+2n^{2}f)dV_{g},$$ where $dV_{g}$ stands for the volume element of $M^n.$ Moreover, equality holds if and only if $M^n$ is isometric to a geodesic ball in a simply connected space form $\Bbb{R}^{n},$ $\Bbb{H}^{n}$ or $\Bbb{S}^{n}.$
\end{theorem}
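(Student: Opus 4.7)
The strategy is to exploit the traceless part of the Miao--Tam equation. Writing $Hess\,f=\tfrac{\Delta f}{n}g+\hess$ and combining (\ref{eqMiaoTam1}) with its trace (which gives $R$ constant and $\Delta f=-(Rf+n)/(n-1)$), an algebraic manipulation yields
$$\hess=f\!\left(Ric-\tfrac{R}{n}g\right)=f\,\ric,$$
so that $|Hess\,f|^{2}=\tfrac{(\Delta f)^{2}}{n}+f^{2}|\ric|^{2}\geq\tfrac{(\Delta f)^{2}}{n}$ pointwise on $M$, with equality exactly where $f\,\ric=0$. The plan is to multiply this inequality by $f\geq 0$, integrate over $M$, transform the left-hand side via the Bochner identity and integration by parts, and assemble the outcome into the sought inequality: the boundary term will produce $|\partial M|/H^{3}$ and the interior terms will reassemble into $C_{R}$.

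Before the main computation, I would record the boundary data. Since $f=0$ and $\Delta f=-n/(n-1)$ on $\partial M$, the Miao--Tam equation restricts there to $Hess\,f=-\tfrac{1}{n-1}g$. Because $\nabla f=-|\nabla f|\nu$ on $\partial M$, this implies $Hess\,f(\nu,X)=0$ for all $X\in T\partial M$, so $X(|\nabla f|^{2})=2Hess\,f(\nabla f,X)=0$ and $|\nabla f|$ is constant on the (connected) boundary. Tracing $Hess\,f=-\tfrac{1}{n-1}g$ tangentially and using the standard identity $Hess\,f(e_{i},e_{j})=-|\nabla f|\,II(e_{i},e_{j})$ then yields $|\nabla f|\,H=1$, so $H$ is constant as well and
$$\int_{\partial M}|\nabla f|^{3}\,dA=\frac{|\partial M|}{H^{3}}.$$

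For the main computation, I would apply the Bochner identity to $f$, multiply by $f$ and integrate. Constancy of $R$ gives $\nabla\Delta f=-\tfrac{R}{n-1}\nabla f$, while the Miao--Tam equation rewrites $fRic(\nabla f,\nabla f)=Hess\,f(\nabla f,\nabla f)-(1+\Delta f)|\nabla f|^{2}=\tfrac{1}{2}\langle\nabla|\nabla f|^{2},\nabla f\rangle-(1+\Delta f)|\nabla f|^{2}$. Integrating by parts twice, and using $f=0$ on $\partial M$ to kill every boundary term except the one coming from $\int_{M}f\Delta|\nabla f|^{2}\,dV_{g}$, one obtains an identity of the form
$$\int_{M}f|Hess\,f|^{2}\,dV_{g}=\int_{\partial M}|\nabla f|^{3}\,dA-\tfrac{R}{n-1}\!\int_{M}f|\nabla f|^{2}\,dV_{g}-\tfrac{n+1}{n-1}\!\int_{M}|\nabla f|^{2}\,dV_{g}.$$
Using $\int_{M}f\Delta f\,dV_{g}=-\int_{M}|\nabla f|^{2}\,dV_{g}$ and $\int_{M}f^{2}\Delta f\,dV_{g}=-2\int_{M}f|\nabla f|^{2}\,dV_{g}$, combined with $\Delta f=-(Rf+n)/(n-1)$, every interior integral reduces to a combination of $\int_{M}f^{k}\,dV_{g}$ for $k=1,2,3$; a parallel, direct expansion of $(Rf+n)^{2}/(n-1)^{2}$ expresses $\int_{M}f(\Delta f)^{2}\,dV_{g}$ in the same data.

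The key step, and the main technical obstacle, is to verify that when these expressions are substituted into $n\int_{M}f|Hess\,f|^{2}\,dV_{g}\geq\int_{M}f(\Delta f)^{2}\,dV_{g}$, the terms involving $\int_{M}|\nabla f|^{2}\,dV_{g}$ cancel and the remaining combination of $\int_{M}f^{k}\,dV_{g}$ assembles precisely into $\tfrac{n+2}{2n(n-1)^{2}}C_{R}$, giving $|\partial M|/H^{3}\geq\tfrac{n+2}{2n(n-1)^{2}}C_{R}$. For the rigidity statement, equality in $|Hess\,f|^{2}\geq(\Delta f)^{2}/n$ forces $f\,\ric\equiv 0$ on $M$; since $f>0$ in the interior, $\ric\equiv 0$ there and $g$ is Einstein, so that the known classification of Einstein Miao--Tam critical metrics (see \cite{miaotam,miaotamTAMS}) identifies $(M^{n},g)$ with a geodesic ball in $\mathbb{R}^{n}$, $\mathbb{H}^{n}$ or $\mathbb{S}^{n}$.
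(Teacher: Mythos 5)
Your proposal is correct and is essentially the paper's own proof: both arguments multiply the B\"ochner formula for $f$ by $f$, integrate, use $f\mathring{Ric}=\mathring{Hess\,f}$ so that the Einstein deficit $\int_M f^3|\mathring{Ric}|^2\,dV_g$ measures the failure of equality, identify the boundary term $|\nabla f|^3|\partial M|=|\partial M|/H^3$, and reduce all interior integrals to $C_R$ via the same integral identities --- your only variation being that you eliminate $fRic(\nabla f,\nabla f)$ pointwise from equation (\ref{eqMiaoTam1}) rather than integrating it by parts against ${\rm div}\,Ric=0$ as the paper does, and your coefficient bookkeeping does assemble exactly into $\frac{n+2}{2n(n-1)^2}C_R$ as you anticipated. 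The one item you omit is the positivity of $C_R$, which is part of the statement but follows at once from identities you already derived: $C_R=\int_M f(Rf+n)^2\,dV_g+n(n-1)\int_M|\nabla f|^2\,dV_g>0.$
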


It is important to recall that, since $f$ is nonnegative, $H=\frac{1}{|\nabla f|}$ is constant on $\partial M$ and hence, $\partial M$ is totally umbilical. For more details, see \cite[Eq. (3.3)]{BDRR} and \cite[Theorem 7]{miaotam}.

In order to justify our second main result it is important to recall a classical example of Miao-Tam critical metric built in a Euclidean ball in $\Bbb{R}^n$ (cf. Example 1 in \cite{BDRR}, see also \cite{miaotam}). Firstly, we consider the triple $(\mathbb{B}_{r}^{n},g_{0},f),$ where $(\mathbb{B}_{r}^{n},g_{0})$ denotes the Euclidean ball in $\mathbb{R}^{n}$ of radius $r$ with standard metric $g_{0}$ and potential function $f(x)=\frac{1}{2(n-1)}(r^{2}-|x|^{2}).$ Thus, it is not hard to verify that $(\mathbb{B}_{r}^{n},g_{0},f)$ is a Miao-Tam critical metric. Furthermore, using the Co-Area formula (cf. \cite{SY}) jointly with a suitable change of variable we obtain

\begin{eqnarray}\label{intf}
\int_{\mathbb{B}_{r}^{n}} f dx&=&\frac{1}{2(n-1)}\left(r^{2}{\rm Vol}(\mathbb{B}_{r}^{n})-\frac{r^{n+2}}{n+2}|\partial \mathbb{B}^{n}_{1}|\right)\nonumber\\&=&\frac{1}{n(n-1)(n+2)}r^{n+2}|\partial \mathbb{B}^{n}_{1}|,
\end{eqnarray} where $\partial \mathbb{B}^{n}_{1}$ stands for the boundary of the unit ball. Consequently, a straightforward computation gives
 
\begin{equation}
\label{4gh}
\frac{|\partial\mathbb{B}_{r}^{n}|}{{\rm Vol}(\mathbb{B}_{r}^{n})^{\frac{n-1}{n}}}=\left(\frac{(n+2)n^{n}H^{n+2}}{(n-1)^{n+1}}\int_{\mathbb{B}_{r}^{n}}fdx\right)^{\frac{1}{n}},
\end{equation} where $H$ is the mean curvature of $\partial\mathbb{B}_{r}^{n}$ with respect to the outward unit normal which is, in this case, given by $H=\frac{n-1}{r}.$

One question that naturally arises from Eq. (\ref{4gh}) is to establish the interplay between vo\-lu\-me and area of the boundary for a general critical metric of the vo\-lu\-me functional on a compact manifold with boundary. In this context, Corvino, Eichmair and Miao \cite{CEM} were able to show that the area of the boundary $\partial M$ of an n-dimensional Miao-tam cri\-ti\-cal me\-tric with zero scalar curvature must have an upper bound depending on the volume of $M^n$ as follows $${\rm Vol}(M)\geq \frac{\sqrt{(n-2)(n-1)}}{n}\Big(\int_{\partial M}R_{_{h}}d\sigma_{g}\Big)^{-\frac{1}{2}}|\partial M|^{\frac{3}{2}},$$ where $R_{_{h}}$ is the scalar curvature of $(\partial M,\,h).$

In the sequel, motivated by this discussion above as well as the isoperimetric problem, we shall use Theorem \ref{thmMainA} to obtain a lower bound estimate (depending on the volume of $M^n$) for the area of the boundary of a compact manifold $M^n$ with nonnegative scalar curvature satisfying (\ref{eqMiaoTam1}). More precisely, we have the following isoperimetric type inequality for Miao-Tam critical metrics with nonnegative scalar curvature.

\begin{theorem}\label{thmMainB}
Let $(M^n,\,g,\,f)$ be a compact, oriented, Miao-Tam critical metric with connected boundary $\partial M$ and nonnegative scalar curvature. Then we have:
\begin{equation}
|\partial M|\geq (C_{R,H})^{\frac{1}{n}}{\rm Vol}(M)^{\frac{n-1}{n}},
\end{equation}
where $C_{R,H}$ is a positive constant given by $C_{R,H}=\frac{(n+2)n^{n-2}}{2(n-1)^{n+1}}H^{n+2}C_{R},$ $H$ is the mean curvature of $\partial M$ with respect to the outward unit normal and $C_{R}$ is described in Theorem \ref{thmMainA}. Moreover, equality holds if and only if $M^n$ is isometric to a geodesic ball in $\mathbb{R}^{n}.$
\end{theorem}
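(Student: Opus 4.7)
The plan is to prove Theorem \ref{thmMainB} by combining Theorem \ref{thmMainA} with a second, purely volumetric lower bound for $|\partial M|$ that is obtained by integrating the trace of the overdetermined system (\ref{eqMiaoTam1}). The nonnegativity of $R$ enters only in this auxiliary step, and it is what makes the two bounds multiplicatively compatible.

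\emph{Step 1 (trace identity).} First I would take the trace of (\ref{eqMiaoTam1}). Using $\mathrm{tr}\,g = n$ this yields
\begin{equation*}
(n-1)\Delta f \;=\; -\bigl(n + fR\bigr).
\end{equation*}
Integrating over $M$ and applying the divergence theorem, together with the standard facts recorded in the introduction that $f=0$ on $\partial M$ and $|\nabla f| = 1/H$ is constant there (so $\int_M \Delta f = -|\partial M|/H$), I would obtain
\begin{equation*}
\frac{|\partial M|}{H} \;=\; \frac{1}{n-1}\left(n\,{\rm Vol}(M) + \int_M fR\,dV_g\right).
\end{equation*}

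\emph{Step 2 (auxiliary inequality).} Since $f \geq 0$ and $R \geq 0$ by hypothesis, dropping the nonnegative term $\int_M fR\,dV_g$ immediately yields
\begin{equation*}
|\partial M| \;\geq\; \frac{nH}{n-1}\,{\rm Vol}(M),
\end{equation*}
with equality if and only if $fR \equiv 0$, i.e., $R \equiv 0$ (because $f>0$ in the interior). This is the only place where $R\geq 0$ is used, and recognizing that such a volumetric complement to Theorem \ref{thmMainA} falls out of the trace equation is the main conceptual step.

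\emph{Step 3 (combination and rigidity).} I would then split $|\partial M|^n = |\partial M|\cdot|\partial M|^{n-1}$, apply Theorem \ref{thmMainA} to the first factor and the inequality of Step 2 to the second, and multiply:
\begin{align*}
|\partial M|^n
&\geq \frac{n+2}{2n(n-1)^2}H^3 C_R \cdot \left(\frac{nH}{n-1}\right)^{n-1}\!{\rm Vol}(M)^{n-1} \\
&= \frac{(n+2)\,n^{n-2}}{2(n-1)^{n+1}}\,H^{n+2}\,C_R\,{\rm Vol}(M)^{n-1}
\;=\; C_{R,H}\,{\rm Vol}(M)^{n-1},
\end{align*}
which is precisely the claim after taking $n$-th roots. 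Equality demands equality in both bounds: Theorem \ref{thmMainA} forces $M^n$ to be a geodesic ball in $\mathbb{R}^n$, $\mathbb{H}^n$ or $\mathbb{S}^n$, while Step 2 forces $R \equiv 0$; the only simultaneously admissible case is a geodesic ball in $\mathbb{R}^n$. Conversely, the explicit computation carried out in (\ref{intf})--(\ref{4gh}) confirms that the Euclidean ball saturates the inequality, closing the rigidity statement.

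The main obstacle is really a bookkeeping one: one must match the constants $\tfrac{n+2}{2n(n-1)^2}\cdot\bigl(\tfrac{n}{n-1}\bigr)^{n-1}$ against $\tfrac{(n+2)n^{n-2}}{2(n-1)^{n+1}}$ and the powers $H^{3}\cdot H^{n-1} = H^{n+2}$ exactly, and one must be careful to discard $\int_M fR\,dV_g$ at the right moment so that the rigidity analysis correctly isolates $\mathbb{R}^n$ among the three space forms allowed by Theorem \ref{thmMainA}.
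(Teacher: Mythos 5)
Your proposal is correct and follows essentially the same route as the paper: the integrated trace identity giving $|\partial M| \geq \frac{nH}{n-1}\,{\rm Vol}(M)$ under $R\geq 0$, the splitting $|\partial M|^n = |\partial M|\cdot|\partial M|^{n-1}$ combined with Theorem \ref{thmMainA}, and the rigidity analysis forcing $R\equiv 0$ so that only the Euclidean ball survives among the three space forms. Your extra remark verifying the converse via the explicit computation (\ref{intf})--(\ref{4gh}) is a welcome, if minor, addition to what the paper records.
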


Recently, mainly motivated by \cite{Lucas}, Batista et al. \cite{BDRR} obtained a B\"ochner type formula for three-dimensional Riemannian manifolds satisfying (\ref{eqMiaoTam1}) involving the traceless Ricci tensor and the Cotton tensor; see also \cite{BalRi2}. As a consequence of such a B\"ochner type formula, they obtained some classification results for critical metrics of the volume functional on three-dimensional compact manifolds with boundary. Before discussing the four-dimensional case, it is important to emphasize that
dimension four displays fascinating and peculiar features, for this reason very much attention has been given to this dimension; see, for instance \cite{besse,GS}, for more details on this specific dimension.

In \cite{derd1}, Derdzi\'nski showed that every oriented four-dimensional Einstein manifold $(M^{4},\,g)$ satisfies the Weitzenb\"ock formula 
$$\Delta |W^{\pm}|^{2}=2|\nabla W^{\pm}|^{2}+R|W^{\pm}|^{2}-36 \det W^{\pm},$$ where $W^{+},$ $W^{-}$ and $R$ stand for the self-dual, anti-self-dual and scalar curvature of $M^4,$ respectively. The Weitzenb\"ock formula is a powerful ingredient in the theory of canonical metrics on 4-manifolds. It may be used to obtain classification results as well as rule out some possible new examples. In conjunction with Hitchin's theory \cite{besse}, the Weitzenb\"ock formula provides the classification of four-dimensional Einstein manifolds with positive scalar curvature. It was recently shown by Wu \cite{WU} an alternative proof of the classical Weitzenb\"ock formula as well as some classification results for Einstein and conformally Einstein four-dimensional manifolds.  Moreover, he obtained a Weitzenb\"ock type formula for a large class of metrics on four-manifolds which are called generalized $m$-quasi-Einstein metrics.  The proof uses, among other ingredients, an elegant argument of Hamilton \cite{Ham} and Berger curvature decomposition \cite{Berger,besse}.

In the second part of this article, by using a method analogous to the one of Wu \cite{WU}, we shall provide a Weitzenb\"ock type formula for critical metrics of the volume functional on four-dimensional manifolds. More precisely, we have established the following result.

\begin{theorem}
\label{thmW}
Let $(M^4,\,g)$ be a four-dimensional connected, smooth Riemannian manifold and $f$ is a smooth function on $M^4$ satisfying (\ref{eqMiaoTam1}). Then we have:
\begin{eqnarray*}
\div(f^2\nabla|W^\pm|^2)&=&2f^2|\nabla W^\pm|^2+\left(\frac{4Rf^2}{3}+\frac{4f}{3}+8|\nabla f|^2\right)|W^\pm|^2\\
 &&+f\langle \nabla|W^\pm|^2,\nabla f\rangle-144f^2\det W^\pm\\
 &&-4f^2\langle \left(Ric\odot Ric\right)^\pm,W^\pm\rangle+6\langle\left(Ric\odot df\otimes df\right)^\pm,W^\pm\rangle,
\end{eqnarray*} where $\odot$ stands for the Kulkarni-Nomizu product and $\otimes$ is the tensorial product.
\end{theorem}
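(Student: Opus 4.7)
The plan is to adapt Wu's \cite{WU} strategy for generalized $m$-quasi-Einstein four-manifolds to the Miao-Tam system (\ref{eqMiaoTam1}). The starting point is the general pointwise Weitzenböck identity for $|W^\pm|^2$ valid on any oriented Riemannian 4-manifold. Combining the second Bianchi identity with Berger's curvature decomposition on $\Lambda^\pm$, as in Hamilton's argument, one arrives at a pointwise identity of the form
\begin{equation*}
\tfrac{1}{2}\Delta |W^\pm|^2 = |\nabla W^\pm|^2 + \tfrac{R}{2}|W^\pm|^2 - 36 \det W^\pm - 2\langle (Ric\odot Ric)^\pm, W^\pm\rangle,
\end{equation*}
which reduces to Derdziński's classical identity when $g$ is Einstein.

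The second step is to multiply the above by $f^{2}$ and use the Leibniz identity
\begin{equation*}
f^{2}\,\Delta |W^\pm|^2 = \div\!\bigl(f^{2}\,\nabla |W^\pm|^2\bigr) - 2f\langle \nabla f,\nabla |W^\pm|^2\rangle
\end{equation*}
to move the Laplacian into the divergence appearing in the target formula. This produces the divergence on the left, the $2f^{2}|\nabla W^\pm|^{2}$ term, part of the $\det W^\pm$ and $Rf^{2}|W^\pm|^{2}$ contributions, and the $-4f^{2}\langle (Ric\odot Ric)^\pm, W^\pm\rangle$ term, but leaves a leftover $2f\langle \nabla f,\nabla|W^\pm|^{2}\rangle$ that must eventually be reduced to the single $f\langle \nabla|W^\pm|^{2},\nabla f\rangle$ in the theorem.

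The third step is to invoke the Miao-Tam equation in the form $fRic = Hess\,f - (\Delta f)g - g$, together with its trace $\Delta f = -(fR+4)/3$. Substituting $-\Delta f=(fR+4)/3$ wherever the trace shows up generates the $\tfrac{4Rf^{2}}{3}|W^\pm|^{2}$ and $\tfrac{4f}{3}|W^\pm|^{2}$ summands with the correct coefficients. Replacing one copy of $fRic$ inside the quartic Ricci contraction by $Hess\,f$ (the $g$-parts being killed by the tracelessness of $W^\pm$), and then applying the Leibniz rule $\div(f\,T)=f\,\div T + \langle\nabla f, T\rangle$ together with the Ricci commutation formula $\nabla_l\nabla_i\nabla_j f - \nabla_i\nabla_l\nabla_j f = -R_{lija}\nabla^{a}f$ (to trade the resulting third derivatives of $f$ for curvature terms against $\nabla f$), should generate the remaining $8|\nabla f|^{2}|W^\pm|^{2}$ contribution, the mixed $6\langle (Ric\odot df\otimes df)^\pm, W^\pm\rangle$ term, and exactly the extra $-f\langle \nabla f,\nabla|W^\pm|^{2}\rangle$ needed to match the transport coefficient in the target.

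The hardest part will be twofold. First, establishing the correct pointwise Weitzenböck identity for $|W^\pm|^{2}$ on a \emph{non}-Einstein 4-manifold requires careful bilinear algebra on $\Lambda^\pm$ and a precise handling of the second Bianchi identity at the level of $\delta W^\pm$ and the Cotton-type tensor. Second, the bookkeeping of numerical coefficients in the third step is delicate: several of the target terms (notably the mixed $Ric\odot df\otimes df$ contraction and the exact coefficient $8$ in front of $|\nabla f|^{2}$) only emerge after the substitution $fRic \mapsto Hess\,f - (\Delta f+1)g$ and a further pointwise reorganization by the Leibniz rule, so one must verify that all contributions combine precisely to yield the stated constants.
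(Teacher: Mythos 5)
Your starting point is where the proposal breaks down. The displayed identity
\begin{equation*}
\tfrac{1}{2}\Delta |W^\pm|^2 = |\nabla W^\pm|^2 + \tfrac{R}{2}|W^\pm|^2 - 36 \det W^\pm - 2\langle (Ric\odot Ric)^\pm, W^\pm\rangle
\end{equation*}
is \emph{not} valid on an arbitrary oriented Riemannian $4$-manifold: by Hamilton's identity (Lemma \ref{lemHam}), $\Delta R_{ijkl}$ contains the second-order terms $\nabla_i(\nabla_k R_{jl}-\nabla_l R_{jk})-\nabla_j(\nabla_k R_{il}-\nabla_l R_{ik})$, which survive contraction with $W^\pm$ and are genuinely differential (Cotton-type, i.e.\ $\delta W^\pm$) contributions for any non-Einstein metric; they cannot be absorbed into the algebraic terms you wrote. (Your identity is also inconsistent with the Einstein case as normalized in this paper: doubling it gives $-72\det W^\pm$, versus Derdzi\'nski's $-36\det W^\pm$.) This is not a repairable bookkeeping issue, because your later steps depend on it: once you multiply a purely algebraic pointwise identity by $f^2$, the resulting formula contains no $\Delta f$, no $Hess\, f$, and no third derivatives of $f$, so there is nothing for the Miao-Tam substitution of your Step 3 to act on. In particular, the terms $\frac{4Rf^2}{3}|W^\pm|^2+\frac{4f}{3}|W^\pm|^2+8|\nabla f|^2|W^\pm|^2$, the mixed term $6\langle (Ric\odot df\otimes df)^\pm, W^\pm\rangle$, and the reduction of the leftover $2f\langle\nabla f,\nabla|W^\pm|^2\rangle$ to $f\langle\nabla|W^\pm|^2,\nabla f\rangle$ cannot emerge from your scheme; nor can they come from the quartic Ricci contraction, since $-4f^2\langle (Ric\odot Ric)^\pm,W^\pm\rangle$ must survive unchanged into the final formula.

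The missing idea is that the differential Ricci terms must be \emph{kept} and then converted using (\ref{eqMiaoTam1}) itself, before any pointwise Weitzenb\"ock-type formula is written down. That is the paper's route: multiply Hamilton's identity by $f$ and apply Lemma \ref{L1} (which is precisely the Ricci-commutation consequence of (\ref{eqMiaoTam1}) you allude to at the end of Step 3) to turn $f(\nabla_k R_{jl}-\nabla_l R_{jk})$ into curvature terms against $\nabla f$; since $R$ is constant, this yields the expression (\ref{eqDeltaRm}) for $f\Delta R_{ijkl}$, whose remaining differential content sits in Cotton tensor terms $C_{jil}\nabla_k f+\cdots$. Contracting with $W$, one then uses the identity $fC_{ijk}=T_{ijk}+W_{ijkl}\nabla_l f$ (Lemma 2 of \cite{BDR}) together with the definition (\ref{TensorT}) of $T_{ijk}$ --- it is exactly these Cotton terms that generate $8|\iota_{\nabla f}W|^2=8|\nabla f|^2|W^\pm|^2$ and the $\langle Ric\odot df\otimes df, W\rangle$ term --- and Eq.\ (1.3) of \cite{CM} to trade $Q(R)$ contracted with $W$ for $Q(W)$ plus Ricci terms. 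Specializing to $W^\pm$, Berger's decomposition $\langle Q(W)^\pm,W^\pm\rangle=36\det W^\pm$ and Lemma 3.2 of \cite{WU} then give the stated formula. If you rebuild your Step 1 on Hamilton's identity plus Lemma \ref{L1}, rather than on a closed pointwise Weitzenb\"ock formula, the remainder of your outline (the Leibniz rule for $\div(f^2\nabla|W^\pm|^2)$ and the trace substitution $\Delta f=-(Rf+4)/3$) does fall into place.
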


We point out that Theorem \ref{thmW} holds just assuming that $(M^4,\,g)$ satisfies the equation (\ref{eqMiaoTam1}). In other words, it is not necessary to assume either any boundary condition or compactness.  Next, as an application of the Weitzenb\"ock type formula obtained in Theorem \ref{thmW} we have the following corollary.
\begin{corollary}\label{cor1}
Let $(M^4,\,g,\,f)$  be a four-dimensional simply connected, compact Miao-Tam critical metric with nonnegative scalar curvature and boundary isometric to a standard sphere $\Bbb{S}^3.$ Suppose that

\begin{equation}
\int_{M} \langle \mathring{Ric}\odot df\otimes df, W\rangle dV_{g}\ge \frac{2}{3}\int_{M}\Big(\sqrt{6}|\mathring{Ric}|^{2}+4|W|^{2}\Big) f^{2}|W| dV_{g},
\end{equation} where $\mathring{Ric}$ stands for the  traceless Ricci tensor. Then $M^{4}$ is isometric to a geodesic ball in a simply connected space form $\mathbb{R}^{4}$ or $\mathbb{S}^{4}.$
\end{corollary}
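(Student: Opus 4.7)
The plan is to integrate the Weitzenb\"ock identity of Theorem \ref{thmW} over $M^4$, use the boundary vanishing $f\equiv 0$ on $\partial M$ to kill divergence terms, then combine the sign hypothesis with the Derdzi\'nski-type pointwise bounds so that the hypothesis exactly absorbs the ``bad'' indefinite terms and forces $W\equiv 0$. Once $(M^4,g)$ is locally conformally flat, an existing classification of locally conformally flat Miao--Tam critical metrics finishes the argument.

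More concretely, I would first apply Theorem \ref{thmW} for $W^+$ and $W^-$ separately and integrate each identity over $M^4$. The divergence theorem kills the left-hand side because $f^2=0$ on $\partial M$. Summing the $+$ and $-$ versions and using the orthogonal decomposition $|W|^2=|W^+|^2+|W^-|^2$ together with the compatibility $\langle A^+,B^+\rangle+\langle A^-,B^-\rangle=\langle A,B\rangle$ gives
\begin{equation*}
\int_M 2f^2|\nabla W|^2+\Bigl(\tfrac{4Rf^2}{3}+\tfrac{4f}{3}+8|\nabla f|^2\Bigr)|W|^2\,dV_g+\int_M f\langle\nabla|W|^2,\nabla f\rangle\,dV_g
\end{equation*}
is equal to an integral of the indefinite terms $144f^2(\det W^++\det W^-)+4f^2\langle Ric\odot Ric,W\rangle-6\langle Ric\odot df\otimes df,W\rangle$. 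The cross term is handled by a single integration by parts using $f\nabla f=\tfrac12\nabla f^2$: the boundary contribution vanishes again, and the trace of (\ref{eqMiaoTam1}) (which in dimension four reads $\Delta f=-\tfrac{4+fR}{3}$) converts it into a nonnegative combination of $|W|^2$-weighted scalars provided $R\geq 0$ and $f\geq 0$.

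Next I would reduce to the traceless Ricci tensor. Because $W$ is totally trace-free, $\langle g\odot T,W\rangle=0$ for every symmetric $2$-tensor $T$; writing $Ric=\mathring{Ric}+\tfrac R4 g$ therefore yields
\begin{equation*}
\langle Ric\odot Ric,W\rangle=\langle\mathring{Ric}\odot\mathring{Ric},W\rangle,\qquad \langle Ric\odot df\otimes df,W\rangle=\langle\mathring{Ric}\odot df\otimes df,W\rangle.
\end{equation*}
At this point I would apply two algebraic estimates: Derdzi\'nski's inequality $|\det W^\pm|\leq\tfrac{1}{3\sqrt6}|W^\pm|^3$ and a Cauchy--Schwarz estimate of the form $|\langle\mathring{Ric}\odot\mathring{Ric},W^\pm\rangle|\leq c\,|\mathring{Ric}|^2|W^\pm|$ with the constant calibrated so that, after the summation $|W^+|^3+|W^-|^3\leq|W|^3$ and $|W^\pm|\leq|W|$, the contribution of the first two indefinite terms is bounded above by $\tfrac{2}{3}\bigl(\sqrt6|\mathring{Ric}|^2+4|W|^2\bigr)f^2|W|$. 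The assumed lower bound on $\int\langle\mathring{Ric}\odot df\otimes df,W\rangle\,dV_g$ then dominates the remaining indefinite term, and the full right-hand side becomes nonpositive.

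Putting everything together, the identity collapses to
\begin{equation*}
\int_M 2f^2|\nabla W|^2\,dV_g+\int_M(\text{nonnegative})\,|W|^2\,dV_g\leq 0,
\end{equation*}
which forces $|W|\equiv 0$ on the interior of $M^4$, hence everywhere by continuity. Therefore $(M^4,g)$ is locally conformally flat. To conclude I would invoke the classification of locally conformally flat Miao--Tam critical metrics (available in \cite{BDR,Kim,miaotam}): under simple connectivity, compactness, nonnegative scalar curvature, and the boundary being isometric to the round $\mathbb{S}^3$, such a metric must be a geodesic ball in a simply connected space form; the nonnegativity of $R$ excludes the hyperbolic model, leaving $\mathbb{R}^4$ or $\mathbb{S}^4$. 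The main technical obstacle is the calibration in Step~3: all three constants $\tfrac23$, $\sqrt6$, and $4$ in the hypothesis must match the coefficients produced by Derdzi\'nski's inequality and the $\mathring{Ric}\odot\mathring{Ric}$ bound after one combines the self-dual and anti-self-dual Weitzenb\"ock formulas, and keeping track of the tensor versus endomorphism normalization of $W^\pm$ is delicate.
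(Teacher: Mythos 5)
Your proposal is correct and follows essentially the paper's own route: the paper integrates Lemma \ref{Lem2} (equivalently, your summed $\pm$ identities from Theorem \ref{thmW}) over $M^4$, kills the boundary term since $f=0$ on $\partial M$, integrates the cross term by parts via the trace equation to get $\int_M f\langle\nabla|W|^2,\nabla f\rangle\,dV_g=\int_M\big(\tfrac{R}{3}f^2+\tfrac{4}{3}f-|\nabla f|^2\big)|W|^2\,dV_g$, bounds the indefinite terms by $|\langle\mathring{Ric}\odot\mathring{Ric},W\rangle|\le\sqrt{6}\,|\mathring{Ric}|^2|W|$ (from $|\mathring{Ric}\odot\mathring{Ric}|^2\le 6|\mathring{Ric}|^4$) and $|\langle Q(W),W\rangle|\le 4|W|^3$ (your Derdzi\'nski determinant bound is this same estimate in endomorphism form, and in fact sharper), so that the hypothesis forces $\int_M\big(\tfrac{5}{3}Rf^2+\tfrac{8}{3}f+7|\nabla f|^2\big)|W|^2\,dV_g=0$, hence $W\equiv 0$, and Theorem 1.2 of \cite{miaotamTAMS} concludes. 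The one imprecision worth fixing: the integrated cross term is not by itself a nonnegative combination (it carries $-|\nabla f|^2|W|^2$); it becomes nonnegative only after absorbing the $8|\nabla f|^2|W|^2$ term coming from $|\iota_{\nabla f}W|^2=|\nabla f|^2|W|^2$, which is exactly how the coefficient $7|\nabla f|^2$ arises in the paper.
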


\section{Background and Key Lemmas}
\label{Preliminaries}

In this section we shall establish the standard notation and terminology that we follow throughout this paper. Moreover, we shall present some key lemmas which will be useful for the establishment of the desired results. We start remembering that the fundamental equation of a Miao-Tam critical metric is given by
\begin{equation}
\label{eqfund1} -(\Delta f)g+Hess f-fRic=g.
\end{equation} We also mention that a Riemannian manifold $(M^{n},\,g)$ for which there exists a nontrivial function $f$ satisfying (\ref{eqfund1}) must have constant scalar curvature $R$ (cf. Proposition 2.1 in \cite{CEM} and Theorem 7 in \cite{miaotam}). 

Taking the trace of (\ref{eqfund1}) we arrive at
\begin{equation}
\label{eqtrace} \Delta f +\frac{fR+n}{n-1}=0.
\end{equation} Putting these facts together, we get
\begin{equation}
\label{eqVstaic2}\nabla^2f-fRic=-\frac{Rf+1}{n-1}g.
\end{equation} Furthermore, it is not difficult to verify the following identities
\begin{equation}\label{eqdeltaf2}
\frac{1}{2}\Delta f^{2}+\frac{R}{n-1}f^{2}+\frac{n}{n-1}f=|\nabla f|^{2}
\end{equation}
and
\begin{equation}\label{eqdeltaf3}
\frac{1}{3}\Delta f^{3}+\frac{R}{n-1}f^{3}+\frac{n}{n-1}f^{2}=2f|\nabla f|^{2}.
\end{equation} Also, it easy to check from (\ref{eqtrace}) that
\begin{equation}
\label{IdRicHess} f\mathring{Ric}=\mathring{Hess f},
\end{equation}  where  $\mathring{T}$ stands for the traceless part of $T.$ Throughout the article, the Einstein convention of summing over the repeated indices will be adopted.

For what follows, it is important to remember that the Weyl tensor $W$ is defined by the following decomposition formula
\begin{eqnarray}
\label{weyl}
R_{ijkl}&=&W_{ijkl}+\frac{1}{n-2}\big(R_{ik}g_{jl}+R_{jl}g_{ik}-R_{il}g_{jk}-R_{jk}g_{il}\big) \nonumber\\
 &&-\frac{R}{(n-1)(n-2)}\big(g_{jl}g_{ik}-g_{il}g_{jk}\big),
\end{eqnarray} where $R_{ijkl}$ stands for the Riemann curvature tensor, whereas the Cotton tensor $C$ is given by
\begin{equation}
\label{cotton} \displaystyle{C_{ijk}=\nabla_{i}R_{jk}-\nabla_{j}R_{ik}-\frac{1}{2(n-1)}\big(\nabla_{i}R
g_{jk}-\nabla_{j}R g_{ik}).}
\end{equation} Notice that $C_{ijk}$ is skew-symmetric in the first two indices and trace-free in any two indices. We also remember that  $W \equiv 0$ in dimension three.

Following the notation employed in \cite{BDR},  we recall that
the covariant 3-tensor $T_{ijk}$ is defined by
 \begin{eqnarray}\label{TensorT}
T_{ijk}&=&\frac{n-1}{n-2}(R_{ik}\nabla_{j}f-R_{jk}\nabla_{i}f)+\frac{1}{n-2}(g_{ik}R_{js}\nabla_{s}f-g_{jk}R_{is}\nabla_{s}f)\nonumber\\
&&-\frac{R}{n-2}(g_{ik}\nabla_{j}f-g_{jk}\nabla_{i}f).
\end{eqnarray} This tensor is closely tied to the Cotton tensor, and it played a fundamental role in the previous work \cite{BDR} on classifying Bach-flat critical metrics of the volume func\-tio\-nal. The tensor $T_{ijk}$ is also skew-symmetric in their first
two indices and trace-free in any two indices.

In order to set the stage for the proof to follow let us recall an useful result obtained previously in \cite[Lemma 1]{BDR}. 

\begin{lemma}[\cite{BDR}]
\label{L1}
Let $(M^{n},g)$ be a connected, smooth Riemannian manifold and $f$ is a smooth function on $M^n$ satisfying Eq. (\ref{eqMiaoTam1}). Then we have:
$$f(\nabla_{i}R_{jk}-\nabla_{j}R_{ik})=R_{ijkl}\nabla_{l}f+\frac{R}{n-1}(\nabla_{i}fg_{jk}-\nabla_{j}fg_{ik})-(\nabla_{i}fR_{jk}-\nabla_{j}f R_{ik}).$$
\end{lemma}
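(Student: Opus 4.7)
The plan is to start from the rewritten critical equation (\ref{eqVstaic2}), namely
\[
\nabla_j \nabla_k f - f R_{jk} = -\frac{Rf+1}{n-1} g_{jk},
\]
and to take one more covariant derivative, using that $R$ is constant (as recalled in the paragraph just after (\ref{eqfund1})). This produces
\[
\nabla_i \nabla_j \nabla_k f - (\nabla_i f) R_{jk} - f\, \nabla_i R_{jk} = -\frac{R}{n-1}\,(\nabla_i f)\, g_{jk}.
\]

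Next I would antisymmetrize in the pair $(i,j)$. Subtracting the same identity with $i$ and $j$ swapped eliminates the nothing-to-do terms and gives
\[
\bigl(\nabla_i \nabla_j \nabla_k f - \nabla_j \nabla_i \nabla_k f\bigr) - \bigl[(\nabla_i f) R_{jk} - (\nabla_j f) R_{ik}\bigr] - f\bigl(\nabla_i R_{jk} - \nabla_j R_{ik}\bigr) = -\frac{R}{n-1}\bigl((\nabla_i f) g_{jk} - (\nabla_j f) g_{ik}\bigr).
\]
The third-derivative commutator on the left is handled by the Ricci identity applied to the one-form $\nabla_k f$, yielding
\[
\nabla_i \nabla_j \nabla_k f - \nabla_j \nabla_i \nabla_k f = R_{ijkl}\,\nabla_l f,
\]
with the sign convention adopted in the paper. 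Plugging this in and solving for $f(\nabla_i R_{jk}-\nabla_j R_{ik})$ produces exactly the claimed identity.

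The main (and really only) potential obstacle here is sign bookkeeping in the Ricci commutation formula for the one-form $\nabla_k f$, combined with the sign of the Riemann tensor convention in force in the paper; once that is fixed consistently with (\ref{weyl}), the three algebraic steps above assemble into the stated equality. No integration, Stokes, or boundary data is needed—this is a purely local, algebraic consequence of differentiating the fundamental equation (\ref{eqVstaic2}).
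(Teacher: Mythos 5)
Your proposal is correct: differentiating (\ref{eqVstaic2}) with $R$ constant, antisymmetrizing in $i,j$, and applying the Ricci identity $\nabla_i\nabla_j\nabla_k f-\nabla_j\nabla_i\nabla_k f=R_{ijkl}\nabla_l f$ (whose sign is indeed consistent with the curvature convention fixed by the decomposition (\ref{weyl})) yields exactly the stated identity. The paper itself does not reprove this lemma but quotes it from \cite{BDR}, and your argument is precisely the standard derivation given there, so there is nothing to add.
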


At the same time, we remember a result obtained by Hamilton \cite[Lemma 7.2]{Ham} which plays a crucial role in this article. 

\begin{lemma}[\cite{Ham}] 
\label{lemHam}
For any metric $g_{ij}$ the curvature tensor $R_{ijkl}$ satisfies the identity
\begin{eqnarray}
\Delta R_{ijkl}&=&\nabla_i(\nabla_kR_{jl}-\nabla_lR_{jk})-\nabla_j(\nabla_kR_{il}-\nabla_lR_{ik})\nonumber\\
 &&-2Q(R)_{ijkl}+(R_{pjkl}R_{pi}-R_{pikl}R_{pj}),
\end{eqnarray} where $Q(R)_{ijkl}=Z_{ijkl}-Z_{ijlk}-Z_{iljk}+Z_{ikjl}$ and $Z_{ijkl}=R_{ipjq}R_{kplq}.$
\end{lemma}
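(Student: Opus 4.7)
The plan is to derive Hamilton's identity directly from the second Bianchi identity by a single commutation of covariant derivatives. First I would rewrite the second Bianchi identity $\nabla_m R_{ijkl}+\nabla_i R_{jmkl}+\nabla_j R_{mikl}=0$, using the antisymmetry $R_{abcd}=-R_{bacd}$ on each of the last two terms, in the equivalent form $\nabla_m R_{ijkl}=\nabla_i R_{mjkl}+\nabla_j R_{imkl}$. Applying $\nabla^m$ to both sides and contracting then yields
\[
\Delta R_{ijkl}=\nabla^m\nabla_i R_{mjkl}+\nabla^m\nabla_j R_{imkl}.
\]

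Next I would commute the outer and inner derivatives in each summand, writing $\nabla^m\nabla_i R_{mjkl}=\nabla_i\nabla^m R_{mjkl}+[\nabla^m,\nabla_i]R_{mjkl}$ and similarly for the other piece. The divergence parts are handled by the once-contracted second Bianchi identity $\nabla^m R_{mjkl}=\nabla_k R_{jl}-\nabla_l R_{jk}$, which immediately produces the first line of the statement, namely $\nabla_i(\nabla_k R_{jl}-\nabla_l R_{jk})-\nabla_j(\nabla_k R_{il}-\nabla_l R_{ik})$ (the second term picks up its overall sign after once more using the antisymmetry of Riemann in the first pair). The two commutator residues $[\nabla^m,\nabla_i]R_{mjkl}$ and $[\nabla^m,\nabla_j]R_{imkl}$ are then expanded using the standard Ricci identity for commuting covariant derivatives on a $(0,4)$-tensor, which produces four schematic Riemann–Riemann terms apiece. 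When the summed index $m$ lands in the first slot of $R_{mjkl}$ (respectively $R_{imkl}$), that slot collapses via a single Ricci contraction into the Ricci–Riemann contribution $R_{pjkl}R_{pi}-R_{pikl}R_{pj}$ appearing in the statement, while the remaining three slots in each commutator produce six contractions of the schematic form $R_{ipjq}R_{kplq}$.

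The main obstacle is the final algebraic reorganization of these six contractions into $-2Q(R)_{ijkl}=-2(Z_{ijkl}-Z_{ijlk}-Z_{iljk}+Z_{ikjl})$. This bookkeeping relies on the pair symmetry $R_{abcd}=R_{cdab}$, the antisymmetries in each pair, and repeated application of the first Bianchi identity $R_{ijkl}+R_{iklj}+R_{iljk}=0$ to cast each term in the canonical $Z$-form. The antisymmetrization pattern $Z_{ijkl}-Z_{ijlk}-Z_{iljk}+Z_{ikjl}$ is forced by the antisymmetries in the free index pairs $(i,j)$ and $(k,l)$ present on both sides, while two of the six contractions become equal (modulo the first Bianchi identity), producing the coefficient $2$ in front of $Q(R)$. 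Assembling the derivative terms, the Ricci–Riemann terms, and the $-2Q(R)_{ijkl}$ combination then yields the stated identity.
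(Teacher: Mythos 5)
The paper contains no proof of this lemma to compare against: the statement is quoted, with its notation, directly from Hamilton's paper \cite{Ham} (Lemma 7.2), and the citation is the paper's entire justification. Your sketch is essentially a reconstruction of Hamilton's original derivation, and it is correct in outline: rewriting the second Bianchi identity as $\nabla_m R_{ijkl}=\nabla_i R_{mjkl}+\nabla_j R_{imkl}$, contracting with $\nabla^m$, commuting derivatives, invoking the once-contracted Bianchi identity $\nabla^m R_{mjkl}=\nabla_k R_{jl}-\nabla_l R_{jk}$ to produce the first line, and expanding the two commutators by the Ricci identity to produce the two Ricci--Riemann terms plus six quadratic Riemann--Riemann contractions are all the right moves, and those six contractions do reorganize into $-2Q(R)_{ijkl}$ as you assert.

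One piece of the bookkeeping you defer is described imprecisely, and since it is the only nontrivial step it is worth getting straight. The coefficient $2$ does not arise because ``two of the six contractions become equal''; rather, all six pair off. The slot-$j$ term of $[\nabla^m,\nabla_i]R_{mjkl}$ equals the slot-$i$ term of $[\nabla^m,\nabla_j]R_{imkl}$ outright by the antisymmetries in each index pair; the slot-$k$ term of the first commutator matches the slot-$l$ term of the second, and vice versa, after using the pair symmetry $R_{abcd}=R_{cdab}$, and these four are already in $Z$-form, contributing $2Z_{ikjl}$ and $2Z_{iljk}$ (up to the overall sign fixed by one's curvature conventions). The first pair, of the adjacent-contracted form $R^{m}{}_{ij}{}^{p}R_{mpkl}$, is the only one not in $Z$-form, and the first Bianchi identity splits it into the two remaining terms $Z_{ijkl}-Z_{ijlk}$, doubled. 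This accounting produces each of the four $Z$-terms of $Q(R)$ with coefficient $2$, exactly as the identity requires; with that step written out, your argument is complete and coincides with Hamilton's.
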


The Kulkarni-Nomizu product  $\odot,$ which takes two symmetric $(0,2)$-tensors and provides a $(0,4)$-tensor with the same algebraic symmetries of the curvature tensor, is defined by
\begin{eqnarray}
(\alpha \odot \beta)_{ijkl}=\alpha_{ik}\beta_{jl}+\alpha_{jl}\beta_{ik}-\alpha_{il}\beta_{jk}-\alpha_{jk}\beta_{il}.
\end{eqnarray} With these notations we may state our first key lemma.

\begin{lemma}
Let $(M^n,\,g)$ be a connected, smooth Riemannian manifold and $f$ is a smooth function on $M^n$ satisfying (\ref{eqMiaoTam1}). Then we have:
\begin{eqnarray}
\div(f\nabla R_{ijkl})&=&\frac{2Rf+2}{n-1}R_{ijkl}-\left(\nabla^2f\odot\left( Ric-\frac{R}{n-1}g\right)\right)_{ijkl}\nonumber\\
 &&+C_{jil}\nabla_kf+C_{ijk}\nabla_lf+C_{lkj}\nabla_if+C_{kli}\nabla_jf\nonumber\\
 &&-2f Q(R)_{ijkl},
\end{eqnarray} where $C_{ijk}$ is the Cotton tensor.
\end{lemma}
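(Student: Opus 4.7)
The plan is to expand
\[
\div\bigl(f\nabla R_{ijkl}\bigr) \;=\; f\,\Delta R_{ijkl} + \nabla^p f\,\nabla_p R_{ijkl}
\]
and process the two summands separately. For the Laplacian term I invoke Hamilton's identity (Lemma \ref{lemHam}). Since Miao--Tam critical metrics have constant scalar curvature, $C_{ijk}=\nabla_i R_{jk}-\nabla_j R_{ik}$, so the ``derivative of Ricci'' pieces in Hamilton's formula collapse to $f\nabla_i C_{klj}-f\nabla_j C_{kli}$. For the ``quadratic in Ricci'' piece $f(R_{pjkl}R_{pi}-R_{pikl}R_{pj})$ I substitute $fR_{pi}=\nabla_p\nabla_i f+\tfrac{Rf+1}{n-1}g_{pi}$ from (\ref{eqVstaic2}): the metric contractions $g_{pi}R_{pjkl}=R_{ijkl}$ and $g_{pj}R_{pikl}=-R_{ijkl}$ produce precisely the coefficient $\tfrac{2(Rf+1)}{n-1}R_{ijkl}$ appearing in the statement, while the Hessian contractions leave a residue $R_{pjkl}\nabla^p\nabla_i f-R_{pikl}\nabla^p\nabla_j f$.

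For $\nabla^p f\,\nabla_p R_{ijkl}$ I apply the second Bianchi identity $\nabla_p R_{ijkl}=-\nabla_i R_{jpkl}-\nabla_j R_{pikl}$ and a Leibniz reshuffle to obtain
\[
\nabla^p f\,\nabla_p R_{ijkl} = -\nabla_i\bigl(\nabla^p f\,R_{jpkl}\bigr)-\nabla_j\bigl(\nabla^p f\,R_{pikl}\bigr) + R_{jpkl}\nabla_i\nabla^p f + R_{pikl}\nabla_j\nabla^p f.
\]
By symmetry of the Hessian of a scalar and the antisymmetry $R_{jpkl}=-R_{pjkl}$, the last two terms exactly cancel the Hessian--Riemann residue from the Hamilton step. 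For the two remaining divergence-form contributions I invoke Lemma \ref{L1} (after relabeling and using the pair/anti-symmetries of $R$) to write
\[
\nabla^p f\,R_{jpkl} \;=\; fC_{klj}-\tfrac{R}{n-1}\bigl(\nabla_k f\,g_{jl}-\nabla_l f\,g_{jk}\bigr)+\bigl(\nabla_k f\,R_{jl}-\nabla_l f\,R_{jk}\bigr),
\]
together with the analogous identity for $\nabla^p f\,R_{pikl}$. When the outer $\nabla_i$ and $\nabla_j$ are distributed, the $f\nabla_i C_{klj}$ and $f\nabla_j C_{kli}$ pieces that emerge precisely cancel those already produced by the Hamilton step.

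What survives then sorts into three families: (i) terms $-\nabla_i f\,C_{klj}+\nabla_j f\,C_{kli}$ coming from the outer derivative landing on $f$; (ii) Ricci-gradient remnants of the form $\nabla_k f(\nabla_i R_{jl}-\nabla_j R_{il})$ and $\nabla_l f(\nabla_i R_{jk}-\nabla_j R_{ik})$ which, using $\nabla R=0$, reduce to $-\nabla_k f\,C_{ijl}+\nabla_l f\,C_{ijk}$; and (iii) Hessian-Ricci plus Hessian-metric terms which, by definition of the Kulkarni--Nomizu product, assemble into $-(\nabla^2 f\odot(Ric-\tfrac{R}{n-1}g))_{ijkl}$. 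The four Cotton pieces from (i) and (ii) are then reorganized into the prescribed order $C_{jil}\nabla_k f+C_{ijk}\nabla_l f+C_{lkj}\nabla_i f+C_{kli}\nabla_j f$ via the antisymmetry $C_{abc}=-C_{bac}$. The main obstacle is essentially combinatorial: tracking signs and index placements so that (a) the Hessian--Riemann residues cancel exactly between the Hamilton and Bianchi steps, (b) the Hessian-Ricci and Hessian-metric remnants fit the Kulkarni--Nomizu symmetrization pattern $\alpha_{ik}\beta_{jl}+\alpha_{jl}\beta_{ik}-\alpha_{il}\beta_{jk}-\alpha_{jk}\beta_{il}$, and (c) the four Cotton contractions end up with the exact asymmetric index pattern stated; no ingredient beyond Lemmas \ref{lemHam} and \ref{L1}, equation (\ref{eqVstaic2}), constancy of $R$, the second Bianchi identity, and symmetry of the Hessian is required.
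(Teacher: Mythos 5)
Your proposal is correct and follows essentially the same route as the paper: Hamilton's identity (Lemma \ref{lemHam}), Lemma \ref{L1}, Eq. (\ref{eqVstaic2}), the second Bianchi identity, and constancy of $R$ produce exactly the cancellations you describe, with the Hessian--Riemann residues, Cotton contractions, and Kulkarni--Nomizu terms assembling as stated. The only difference is organizational—the paper carries out the entire computation inside $f\Delta R_{ijkl}$ (arriving at its Eq. (\ref{eqDeltaRm}), where the Bianchi identity generates a $-\nabla_p R_{ijkl}\nabla_p f$ term that the final divergence identity absorbs), whereas you split $\div(f\nabla R_{ijkl})$ at the outset and let the same terms cancel across the two halves.
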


\begin{proof} First of all,  we need to express $f\Delta R_{ijkl}$ in terms of the Cotton tensor. To that end, we first use Lemma \ref{lemHam} to infer
\begin{eqnarray*} 
f\Delta R_{ijkl}&=&f\nabla_i(\nabla_kR_{jl}-\nabla_lR_{jk})-f\nabla_j(\nabla_kR_{il}-\nabla_lR_{ik})\\
 &&+f(R_{pjkl}R_{pi}-R_{pikl}R_{pj})-2fQ(R)_{ijkl}\\
 &=&\nabla_i[f(\nabla_kR_{jl}-\nabla_lR_{jk})]-(\nabla_kR_{jl}-\nabla_lR_{jk})\nabla_if\\
 &&-\nabla_j[f(\nabla_kR_{il}-\nabla_lR_{ik})]+(\nabla_kR_{il}-\nabla_lR_{ik})\nabla_jf\\
 &&+f(R_{pjkl}R_{pi}-R_{pikl}R_{pj})-2fQ(R)_{ijkl},
\end{eqnarray*} and therefore by Lemma \ref{L1} we obtain

\begin{eqnarray*}
f\Delta R_{ijkl}&=&\nabla_i[R_{kljp}\nabla_p f+\frac{R}{n-1}(\nabla_kfg_{jl}-\nabla_lfg_{jk})-(\nabla_kfR_{jl}-\nabla_lfR_{jk})]\\
 &&-\nabla_j[R_{klip}\nabla_p f+\frac{R}{n-1}(\nabla_kfg_{il}-\nabla_lfg_{ik})-(\nabla_kfR_{il}-\nabla_lfR_{ik})]\\
 &&-(\nabla_kR_{jl}-\nabla_lR_{jk})\nabla_if+(\nabla_kR_{il}-\nabla_lR_{ik})\nabla_jf\\
 &&+f(R_{pjkl}R_{pi}-R_{pikl}R_{pj})-2fQ(R)_{ijkl}.
 \end{eqnarray*} Rearranging the terms we get

 \begin{eqnarray*}
 f\Delta R_{ijkl}&=&\nabla_iR_{kljp}\nabla_p f+R_{kljp}\nabla_i\nabla_pf+\frac{R}{n-1}(\nabla_i\nabla_kfg_{jl}-\nabla_i\nabla_lfg_{jk})\\
 &&-(\nabla_i\nabla_k fR_{jl}+\nabla_kf\nabla_iR_{jl}-\nabla_i\nabla_lfR_{jk}-\nabla_lf\nabla_iR_{jk})\\
 &&-\nabla_jR_{klip}\nabla_pf-R_{klip}\nabla_j\nabla_pf-\frac{R}{n-1}(\nabla_j\nabla_kfg_{il}-\nabla_j\nabla_lfg_{ik})\\
 &&+(\nabla_j\nabla_kfR_{il}+\nabla_kf\nabla_jR_{il}-\nabla_j\nabla_lfR_{ik}-\nabla_lf\nabla_jR_{ik})\\
 &&-(\nabla_kR_{jl}-\nabla_lR_{jk})\nabla_if+(\nabla_kR_{il}-\nabla_lR_{ik})\nabla_jf\\
 &&+f(R_{pjkl}R_{pi}-R_{pikl}R_{pj})-2fQ(R)_{ijkl}\\
 &=&\nabla_iR_{jpkl}\nabla_pf+\nabla_jR_{pikl}\nabla_pf+R_{jpkl}\nabla_i\nabla_pf-R_{ipkl}\nabla_j\nabla_pf\\
 &&+\frac{R}{n-1}(\nabla_i\nabla_kfg_{jl}-\nabla_i\nabla_lfg_{jk}-\nabla_j\nabla_kfg_{il}+\nabla_j\nabla_lfg_{ik})\\
 &&-(\nabla_i\nabla_kfR_{jl}-\nabla_i\nabla_lfR_{jk}-\nabla_j\nabla_kfR_{il}+\nabla_j\nabla_lfR_{ik})\\
 &&+(\nabla_jR_{il}-\nabla_iR_{jl})\nabla_kf+(\nabla_iR_{jk}-\nabla_jR_{ik})\nabla_lf\\
 &&-(\nabla_kR_{jl}-\nabla_lR_{jk})\nabla_if+(\nabla_kR_{il}-\nabla_lR_{ik})\nabla_jf\\
 &&+f(R_{pjkl}R_{pi}-R_{pikl}R_{pj})-2fQ(R)_{ijkl}.
  \end{eqnarray*} Thus, since $M^n$ has constant scalar curvature, it follows from (\ref{cotton}) that

  \begin{eqnarray}
  \label{jk2}
  f\Delta R_{ijkl}&=&\nabla_iR_{jpkl}\nabla_pf+\nabla_jR_{pikl}\nabla_pf+R_{jpkl}\nabla_i\nabla_p f-R_{ipkl}\nabla_j\nabla_p f\nonumber\\
 &&-\left(\nabla^2f\odot\left( Ric-\frac{R}{n-1}g\right)\right)_{ijkl}+C_{jil}\nabla_kf+C_{ijk}\nabla_lf+C_{lkj}\nabla_i f\nonumber\\
 &&+C_{kli}\nabla_jf+f(R_{pjkl}R_{pi}-R_{pikl}R_{pj})-2fQ(R)_{ijkl}.
\end{eqnarray} Proceeding, we use the Bianchi identity and Eq. (\ref{eqVstaic2}) to arrive at

\begin{eqnarray*}
f\Delta R_{ijkl}&=&-\nabla_pR_{ijkl}\nabla_pf+R_{jpkl}\nabla_i\nabla_pf-R_{ipkl}\nabla_j\nabla_pf\nonumber\\
 &&-\left(\nabla^2f\odot\left( Ric-\frac{R}{n-1}g\right)\right)_{ijkl}+C_{jil}\nabla_kf+C_{ijk}\nabla_lf+C_{lkj}\nabla_if\nonumber\\
 &&+C_{kli}\nabla_jf+f(R_{pjkl}R_{pi}-R_{pikl}R_{pj})-2fQ(R)_{ijkl}\nonumber\\
 &=&-\nabla_pR_{ijkl}\nabla_pf+R_{jpkl}\Big(fR_{ip}-\frac{Rf+1}{n-1}g_{ip}\Big)-R_{ipkl}\Big(fR_{jp}-\frac{Rf+1}{n-1}g_{jp}\Big)\nonumber\\
 &&-\left(\nabla^2f\odot\left(Ric-\frac{R}{n-1}g\right)\right)_{ijkl}+C_{jil}\nabla_kf+C_{ijk}\nabla_lf+C_{lkj}\nabla_if\nonumber\\
 &&+ C_{kli}\nabla_jf+f(R_{pjkl}R_{pi}-R_{pikl}R_{pj})-2fQ(R)_{ijkl}\\&=&-\nabla_pR_{ijkl}\nabla_pf-f(R_{pjkl}R_{pi}-R_{pikl}R_{pj})+2\frac{(Rf+1)}{n-1}R_{ijkl}\nonumber\\
 &&-\left(\nabla^2f\odot\left( Ric-\frac{R}{n-1}g\right)\right)_{ijkl}+C_{jil}\nabla_kf+C_{ijk}\nabla_lf+C_{lkj}\nabla_if\nonumber\\
 &&+C_{kli}\nabla_jf+f(R_{pjkl}R_{pi}-R_{pikl}R_{pj})-2fQ(R)_{ijkl},
 \end{eqnarray*} which can rewritten succinctly as

  \begin{eqnarray}\label{eqDeltaRm}
f\Delta R_{ijkl}&=&\frac{2Rf+2}{n-1}R_{ijkl}-\left(\nabla^2f\odot\left( Ric-\frac{R}{n-1}g\right)\right)_{ijkl}-\nabla_pR_{ijkl}\nabla_pf\nonumber\\
 &&+C_{jil}\nabla_kf+C_{ijk}\nabla_lf+C_{lkj}\nabla_if+C_{kli}\nabla_jf-2fQ(R)_{ijkl}.
\end{eqnarray}

Finally, we are ready to compute $\div(f\nabla R_{ijkl}).$ To do so, it suffices to observe that $$\div(f\nabla R_{ijkl})=f\Delta R_{ijkl}+\nabla_pR_{ijkl}\nabla_p f,$$ and this combined with (\ref{eqDeltaRm}) yields
\begin{eqnarray*}
\div(f\nabla R_{ijkl})&=&\frac{2Rf+2}{n-1}R_{ijkl}-\left(\nabla^2f\odot\left( Ric-\frac{R}{n-1}g\right)\right)_{ijkl}\\
 &&+C_{jil}\nabla_kf+C_{ijk}\nabla_lf+C_{lkj}\nabla_if+C_{kli}\nabla_jf-2fQ(R)_{ijkl}.
\end{eqnarray*} So, the proof of the lemma is finished.
\end{proof}

To conclude this section we shall provide an useful expression for $\div(f^2\nabla|W|^2),$ which is a key ingredient in the proof of Theorem \ref{thmW}. Before to do that, we mention that in the remainder of this section, we will always consider $$\langle S,T\rangle=S_{ijkl}T^{ijkl},$$ for any $(0,4)$-tensors $S$ and $T.$ Our convention differs from \cite{WU} by $\frac{1}{4}.$

\begin{lemma}\label{Lem2}
Let $(M^n,\,g,\,f)$ be a connected, smooth Riemannian manifold and $f$ is a smooth function on $M^n$ satisfying (\ref{eqMiaoTam1}). Then we have:
\begin{eqnarray}
\label{1ad}
\div(f^2\nabla|W|^2)&=&\frac{4Rf^2}{n-1}|W|^2+\frac{4f}{n-1}|W|^2-\frac{2nf^2}{n-2}\langle Ric\odot Ric,W\rangle\nonumber\\
 &&+f\langle \nabla|W|^2,\nabla f\rangle+\frac{4(n-1)}{n-2}\langle Ric\odot df\otimes df,W\rangle\\
 &&+8|\iota_{\nabla f}W|^2-4f^2\langle Q(W),W\rangle+2f^2|\nabla W|^2\nonumber,
\end{eqnarray} where  $\iota$ is the interior multiplication and $\otimes$ stands for the tensorial product.
\end{lemma}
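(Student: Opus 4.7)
The plan is to reduce everything to a contraction of the formula for $f\Delta R_{ijkl}$ proved in the previous lemma against $W^{ijkl}$, and then to process the resulting curvature expressions algebraically. First I would write
\begin{equation*}
\div(f^2\nabla|W|^2)=f^2\Delta|W|^2+2f\langle\nabla f,\nabla|W|^2\rangle,\qquad \Delta|W|^2=2|\nabla W|^2+2\langle W,\Delta W\rangle,
\end{equation*}
so that the problem reduces to computing $\langle W,\Delta W\rangle$. The basic observation is that, since the scalar curvature is constant, $g$ is parallel, and $W$ is totally trace-free, one has $\langle W,S\odot g\rangle=0$ for every symmetric $(0,2)$-tensor $S$. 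Applied to the Weyl decomposition $R=W+\tfrac{1}{n-2}Ric\odot g-\tfrac{R}{2(n-1)(n-2)}g\odot g$, this yields $\langle W,\Delta W\rangle=\langle W,\Delta R\rangle$ and $W^{ijkl}\nabla_pR_{ijkl}=\tfrac12\nabla_p|W|^2$. Consequently $f^2\Delta|W|^2=2f^2|\nabla W|^2+2fW^{ijkl}(f\Delta R_{ijkl})$, and it remains to contract formula (\ref{eqDeltaRm}) with $fW^{ijkl}$ and collect terms.

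Going term by term in (\ref{eqDeltaRm}): the prefactor $\tfrac{2Rf+2}{n-1}R_{ijkl}$ contributes $\tfrac{4Rf^2+4f}{n-1}|W|^2$ after doubling. For the Hessian term, equation (\ref{eqVstaic2}) gives $f\nabla^2 f=f^2 Ric-\tfrac{f(Rf+1)}{n-1}g$, so the $g$-piece vanishes against $W$ and this term contributes $-2f^2\langle Ric\odot Ric,W\rangle$. The derivative term $-\nabla_pR_{ijkl}\nabla_p f$ contracts with $W^{ijkl}$ to $-\tfrac12\langle\nabla f,\nabla|W|^2\rangle$, and after multiplication by $2f$ and combination with the cross term $2f\langle\nabla f,\nabla|W|^2\rangle$ from the expansion of $\div(f^2\nabla|W|^2)$ it leaves precisely $f\langle\nabla|W|^2,\nabla f\rangle$.

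For the four Cotton terms, the symmetries $W^{ijkl}=W^{klij}=-W^{jikl}=-W^{ijlk}$ together with $C_{ijk}=-C_{jik}$ show that each of them coincides, after a relabeling of dummy indices, with $W^{ijkl}C_{ijk}\nabla_l f$, so in total they contribute $4W^{ijkl}C_{ijk}\nabla_l f$. Invoking Lemma~\ref{L1} (the $\nabla R$-piece of the Cotton tensor drops out because $R$ is constant), I replace $fC_{ijk}$ by $R_{ijkm}\nabla_m f+\tfrac{R}{n-1}(\nabla_if\,g_{jk}-\nabla_jf\,g_{ik})-(\nabla_if\,R_{jk}-\nabla_jf\,R_{ik})$. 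The $g$-terms vanish by the trace-freeness of $W$; decomposing $R_{ijkm}$ once more via the Weyl formula, the pure Weyl piece yields $8|\iota_{\nabla f}W|^2$, while the remaining Ricci pieces, combined with the $-(\nabla_if\,R_{jk}-\nabla_jf\,R_{ik})$ summand and the identity $\langle W,Ric\odot df\otimes df\rangle=4W^{ijkl}R_{ik}\nabla_jf\nabla_lf$ (obtained by repeatedly using the antisymmetry of $W$ in its first two indices), assemble the coefficient $\tfrac{4(n-1)}{n-2}\langle Ric\odot df\otimes df,W\rangle$.

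Finally, the contribution $-4f^2\langle W,Q(R)\rangle$ coming from the last term in (\ref{eqDeltaRm}) must be converted into $-4f^2\langle Q(W),W\rangle$ plus a Ricci correction. This is a Berger-type algebraic identity: substituting $R=W+A$ into $Q(R)$, pairing against $W$, and repeatedly invoking $\langle W,T\odot g\rangle=0$, one verifies
\begin{equation*}
\langle W,Q(R)\rangle=\langle W,Q(W)\rangle+\tfrac{1}{n-2}\langle Ric\odot Ric,W\rangle.
\end{equation*}
Together with the $-2f^2\langle Ric\odot Ric,W\rangle$ already generated by the Hessian term, this produces exactly $-4f^2\langle Q(W),W\rangle-\tfrac{2nf^2}{n-2}\langle Ric\odot Ric,W\rangle$, matching the target. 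The main obstacle I expect is precisely this Berger-type expansion of $Q(R)$ in terms of $Q(W)$ and the $Ric\odot Ric$ correction, together with the index bookkeeping in the Cotton block; beyond that, the derivation is a long but essentially mechanical expansion driven by the previous lemma, the Miao--Tam equation, and the trace-free character of $W$.
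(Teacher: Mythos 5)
Your proposal follows essentially the same route as the paper's own proof: it contracts the formula for $f\Delta R_{ijkl}$ from the preceding lemma against $W$, uses the trace-freeness of $W$ and Eq. (\ref{eqVstaic2}) for the Hessian term, recombines the gradient and cross terms into $f\langle\nabla|W|^2,\nabla f\rangle$, and invokes exactly the same conversion $\langle Q(R),W\rangle=\langle Q(W),W\rangle+\tfrac{1}{n-2}\langle Ric\odot Ric,W\rangle$ that the paper imports from \cite{CM} (Eq. (1.3)) rather than verifying by hand. The only cosmetic difference is in the Cotton block, where you substitute Lemma \ref{L1} and the Weyl decomposition directly instead of routing through the auxiliary tensor $T_{ijk}$ of (\ref{TensorT}) via Lemma 2 of \cite{BDR}; this is the same computation with identical outcome, so the proposal is correct relative to the paper's argument.
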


\begin{proof} Since $W$ is traceless, it is not difficult to see that $\langle A\odot g,W\rangle=0$ for any $(0,2)$-tensor $A.$ This data jointly with (\ref{eqDeltaRm}) gives

\begin{eqnarray*}
f^2\langle\Delta Rm,W\rangle&=&f^2\Delta R_{ijkl}W_{ijkl}\\
 &=&\frac{2Rf^2+2f}{n-1}R_{ijkl}W_{ijkl}-f\left[\nabla^2f\odot\left(Ric-\frac{R}{n-1}g\right)\right]_{ijkl}W_{ijkl}\\
 &&-f\nabla_pR_{ijkl}\nabla_pfW_{ijkl}+fC_{ijk}\nabla_lfW_{ijkl}+fC_{jil}\nabla_kfW_{ijkl}\\
 &&+fC_{kli}\nabla_jfW_{ijkl}+fC_{lkj}\nabla_ifW_{ijkl}-2f^2Q(R)_{ijkl}W_{ijkl}.
 \end{eqnarray*} Consequently, it follows from (\ref{weyl}) that

 \begin{eqnarray*}
 f^2\langle\Delta Rm,W\rangle&=&\frac{2Rf^2}{n-1}|W|^2+\frac{2f}{n-1}|W|^2-f(\nabla^2f\odot Ric)_{ijkl}W_{ijkl}\\
 &&-f\nabla_pR_{ijkl}\nabla_pfW_{ijkl}+4fC_{ijk}\nabla_lfW_{ijkl}-2f^2Q(R)_{ijkl}W_{ijkl}
\end{eqnarray*} and hence, using Eq. (1.3) of \cite{CM} and once more (\ref{weyl}), we obtain
\begin{eqnarray*}
f^2\langle\Delta Rm,W\rangle&=&\frac{2Rf^2}{n-1}|W|^2+\frac{2f}{n-1}|W|^2-f(\nabla^2f\odot Ric)_{ijkl}W_{ijkl}\\
 &&-f\nabla_pW_{ijkl}\nabla_pfW_{ijkl}+4fC_{ijk}\nabla_lfW_{ijkl}-2f^2Q(W)_{ijkl}W_{ijkl}\\
 &&-\frac{4f^2}{n-2}(R_{ik}R_{jl}-R_{il}R_{jk})W_{ijkl}.
\end{eqnarray*} Thereby, from Lemma 2 of \cite{BDR} and (\ref{eqfund1}) we achieve

\begin{eqnarray}
\label{ei12}
f^2\langle\Delta Rm,W\rangle &=&\frac{2Rf^2}{n-1}|W|^2+\frac{2f}{n-1}|W|^2-f(\nabla^2f\odot Ric)_{ijkl}W_{ijkl}\nonumber\\
 &&-f\nabla_p W_{ijkl}\nabla_pfW_{ijkl}+4T_{ijk}\nabla_lfW_{ijkl}+4|\iota_{\nabla f}W|^2\nonumber\\
 &&-2f^2\langle Q(W),W\rangle-\frac{2f^2}{n-2}(Ric\odot Ric)_{ijkl}W_{ijkl},
 \end{eqnarray} where  $\iota$ is the {\it interior multiplication} and $T_{ijk}$ was defined in (\ref{TensorT}). In particular, the expression of (\ref{TensorT}) substituted into (\ref{ei12}) jointly with (\ref{eqMiaoTam1}) immediately arrives at

 \begin{eqnarray}
   \label{pl1}
f^2\langle\Delta Rm,W\rangle &=&\frac{2Rf^2}{n-1}|W|^2+\frac{2f}{n-1}|W|^2-\frac{nf^2}{n-2}(Ric\odot Ric)_{ijkl}W_{ijkl}\nonumber\\
 &&-f\nabla_pW_{ijkl}\nabla_pfW_{ijkl}+\frac{4(n-1)}{n-2}(R_{ik}\nabla_jf\nabla_lf-R_{jk}\nabla_if\nabla_lf)W_{ijkl}\nonumber\\
 &&+4|\iota_{\nabla f}W|^2-2f^2\langle Q(W),W\rangle.
\end{eqnarray} Easily one verifies that $$(Ric\odot df\otimes df)_{ijkl}W_{ijkl}=2(R_{ik}\nabla_jf\nabla_lf-R_{jk}\nabla_if\nabla_lf)W_{ijkl},$$ and plugging this in (\ref{pl1}) we see that
\begin{eqnarray}
\label{jkh}
f^2\langle\Delta Rm,W\rangle&=&\frac{2Rf^2}{n-1}|W|^2+\frac{2f}{n-1}|W|^2-\frac{nf^2}{n-2}\langle Ric\odot Ric,W\rangle\nonumber\\
 &&-\frac{1}{2}f\nabla_p|W|^2\nabla_pf+\frac{2(n-1)}{n-2}(Ric\odot df\otimes df)_{ijkl}W_{ijkl}\nonumber\\
 &&+4|\iota_{\nabla f}W|^2-2f^2\langle Q(W),W\rangle\nonumber\\
 &=&\frac{2Rf^2}{n-1}|W|^2+\frac{2f}{n-1}|W|^2-\frac{nf^2}{n-2}\langle Ric\odot Ric,W\rangle\nonumber\\
 &&-\frac{1}{2}f\langle \nabla|W|^2,\nabla f\rangle+\frac{2(n-1)}{n-2}\langle Ric\odot df\otimes df,W\rangle\\
 &&+4|\iota_{\nabla f}W|^2-2f^2\langle Q(W),W\rangle\nonumber.
\end{eqnarray}

On the other hand, it is straightforward to verify that

\begin{eqnarray}
\label{lkj1}
\div(f^2\nabla|W|^2)&=&f^2\Delta|W|^2+\langle\nabla|W|^2,\nabla f^2\rangle\nonumber\\
&=&2f^2\langle\Delta W,W\rangle+2f^2|\nabla W|^2+2f\langle\nabla|W|^2,\nabla f\rangle\nonumber\\
 &=&2f^2\langle\Delta Rm,W\rangle+2f^2|\nabla W|^2+2f\langle\nabla|W|^2,\nabla f\rangle.
 \end{eqnarray} Together, (\ref{jkh}) and (\ref{lkj1}) yield

\begin{eqnarray*}
\div(f^2\nabla|W|^2) &=&\frac{4Rf^2}{n-1}|W|^2+\frac{4f}{n-1}|W|^2-\frac{2nf^2}{n-2}\langle Ric\odot Ric,W\rangle\\
 &&+f\langle \nabla|W|^2,\nabla f\rangle+\frac{4(n-1)}{n-2}\langle Ric\odot df\otimes df,W\rangle\\
 &&+8|\iota_{\nabla f}W|^2 -4f^2\langle Q(W),W\rangle+2f^2|\nabla W|^2,
\end{eqnarray*} which gives the desired result.
\end{proof}

As already mentioned, 4-manifolds display peculiar features. For ins\-tan\-ce, the bundle of $2$-forms on a four-dimensional oriented Riemannian manifold can be invariantly decomposed as a direct sum $$\Lambda^2=\Lambda^{+}\oplus\Lambda^{-}.$$ This decomposition is conformally invariant. Moreover, it allows us to conclude that the Weyl tensor $W$ is an endomorphism of $\Lambda^2=\Lambda^{+} \oplus \Lambda^{-} $ such that $W = W^+\oplus W^-,$ where $W^{+}$ and $W^{-}$ stand for the self-dual and anti-self-dual parts of the Weyl tensor of $M^4,$ respectively. In particular, we also mention that a four-dimensional connected, smooth Riemannian manifold $(M^4,\,g)$ and a smooth function $f$ on $M^4$ satisfying (\ref{eqMiaoTam1}) must satisfy

\begin{eqnarray}
\label{keq}
\div(f^2\nabla|W^{\pm}|^2)&=&\frac{4Rf^2}{3}|W^{\pm}|^2+\frac{4f}{3}|W^{\pm}|^2- 4f^2\langle (Ric\odot Ric)^{\pm},W^{\pm}\rangle\nonumber\\
 &&+f\langle \nabla|W^{\pm}|^2,\nabla f\rangle+6\langle (Ric\odot df\otimes df)^{\pm},W^{\pm}\rangle\\
 &&+8|\iota_{\nabla f}W^{\pm}|^2-4f^2\langle Q(W)^{\pm},W^{\pm}\rangle+2f^2|\nabla W^{\pm}|^2\nonumber.
\end{eqnarray}

It should be pointed out that the proof of (\ref{keq}) for the anti-self-dual and self-dual parts of the Weyl tensor can be implemented in quite the same way of the proof of Lemma \ref{Lem2}, so it is omitted.

\section{Proof of the Main Results}

\subsection{Proof of Theorem \ref{thmMainA}}
\begin{proof} To begin with, notice that the boundary condition and (\ref{eqMiaoTam1}) implies

$$D_{X}|\nabla f|^{2}=2Hess\,f(X,\nabla f)=0,$$ for all $X$ tangent to $\partial M.$ In particular, it is easy to see that $|\nabla f|\neq 0$ along $\partial M.$

We now recall the classical B\"ochner formula (cf. \cite{chavel}, p. 83):

$$\frac{1}{2}\Delta|\nabla f|^{2}=Ric(\nabla f, \nabla f)+\langle\nabla\Delta f,\nabla f\rangle+|Hess\, f|^{2}.$$ This jointly with (\ref{eqtrace}) yields
\begin{eqnarray*}
\int_{M}f\Delta |\nabla f|^{2}dV_{g}&=&2\int_{M}f Ric(\nabla f,\nabla f) dV_{g}-\frac{2R}{n-1}\int_{M}f|\nabla f|^{2} dV_{g}\\
&&+2\int_{M}f|Hess\, f|^{2}dV_{g}.
\end{eqnarray*} With aid of (\ref{IdRicHess}) we can rewtite this above equation as
\begin{eqnarray}\label{intaux1}
\int_{M}f\Delta |\nabla f|^{2}dV_{g}&=&2\int_{M}f Ric(\nabla f,\nabla f) dV_{g}-\frac{2R}{n-1}\int_{M}f|\nabla f|^{2} dV_{g}\nonumber\\
&&+2\int_{M}f^{3}|\mathring{Ric}|^{2}dV_{g}+\frac{2}{n}\int_{M}f(\Delta f)^{2}dV_{g}.
\end{eqnarray}

Proceeding, it is not hard to check that 

\begin{eqnarray*}
2\int_{M}fRic(\nabla f,\nabla f) dV_{g}&=&\int_{M}{\rm div}(f^{2}Ric(\nabla f))dV_{g}-\int_{M}f^{2}R_{ij}\nabla_{i}\nabla_{j}f dV_{g},
\end{eqnarray*} where we have used the twice contracted second Bianchi identity ($2{\rm div}Ric=\nabla R=0$). Next, we use (\ref{eqfund1}) and that $\mathring{Ric}=Ric-\frac{R}{n}g$ to get

\begin{eqnarray}\label{intRicff}
2\int_{M}fRic(\nabla f,\nabla f) dV_{g}&=&-\int_{M}f^{3}|\mathring{Ric}|^{2}dV_{g}+\frac{1}{n(n-1)}\int_{M}R^{2}f^{3}dV_{g}\nonumber\\
&&+\frac{1}{n-1}\int_{M}Rf^{2}dV_{g}.
\end{eqnarray} 

On the other hand, we use (\ref{eqtrace}) to arrive at
\begin{eqnarray*}
{\rm div}(f\nabla|\nabla f|^{2}-|\nabla f|^{2}\nabla f)&=&f\Delta|\nabla f|^{2}+\frac{R}{n-1}f|\nabla f|^{2}+\frac{n}{n-1}|\nabla f|^{2}.
\end{eqnarray*} Upon integrating this expression over $M^n$ we use Stokes's formula to obtain
\begin{eqnarray}\label{intLf}
\int_{M}f\Delta |\nabla f|^{2}dV_{g}&=&|\nabla f|^{3}|\partial M|-\frac{R}{n-1}\int_{M}f|\nabla f|^{2}dV_{g}\nonumber\\
&&-\frac{n}{n-1}\int_{M}|\nabla f|^{2} dV_{g}.
\end{eqnarray} Substituting (\ref{intRicff}) into (\ref{intaux1}) and comparing with (\ref{intLf}) we obtain
\begin{eqnarray*}
|\nabla f|^{3}|\partial M|&=&\int_{M}f^{3}|\mathring{Ric}|^{2}dV_{g}-\frac{1}{n-1}\int_{M}Rf|\nabla f|^{2}dV_{g}\\
&&+\frac{1}{n(n-1)}\int_{M}R^{2}f^{3}dV_{g}+\frac{1}{n-1}\int_{M}Rf^{2}dV_{g}\\
&&+\frac{2}{n}\int_{M}f(\Delta f)^{2}dV_{g}+\frac{n}{n-1}\int_{M}|\nabla f|^{2}dV_{g}.
\end{eqnarray*} Next, use (\ref{eqtrace}) and (\ref{eqdeltaf3}) to arrive at

\begin{eqnarray*}
|\nabla f|^{3}|\partial M|&=&\int_{M}f^{3}|\mathring{Ric}|^{2}dV_{g}+\frac{n-2}{2n(n-1)^{2}}\int_{M}R^{2}f^{3}dV_{g}\\
&&+\frac{n-2}{2(n-1)^{2}}\int_{M}Rf^{2}dV_{g}+\frac{2}{n}\int_{M}f\left(\frac{Rf}{n-1}+\frac{n}{n-1}\right)^{2}dV_{g}\\
&&+\frac{n}{n-1}\int_{M}|\nabla f|^{2}dV_{g}\\
&=&\int_{M}f^{3}|\mathring{Ric}|^{2}dV_{g}+\frac{n+2}{2n(n-1)^{2}}\int_{M}R^{2}f^{3}dV_{g}\\
&&+\frac{n+6}{2(n-1)^{2}}\int_{M}Rf^{2}dV_{g}+\frac{2n}{(n-1)^{2}}\int_{M}f dV_{g}\\
&&+\frac{n}{n-1}\int_{M}|\nabla f|^{2}dV_{g}.
\end{eqnarray*}

From this it follows that
\begin{eqnarray}
\label{eqboM0}
|\partial M|&=& H^{3}\int_{M}f^{3}|\mathring{Ric}|^{2}dV_{g}+\frac{n+2}{2n(n-1)^{2}}H^{3}\int_{M}R^{2}f^{3}dV_{g}\nonumber\\
&&+\frac{3(n+2)}{2(n-1)^{2}}H^{3}\int_{M}Rf^{2}dV_{g}+\frac{n(n+2)}{(n-1)^{2}}H^{3}\int_{M}fdV_{g}\nonumber\\
&=& H^{3}\int_{M}f^{3}|\mathring{Ric}|^{2}dV_{g}\nonumber\\&&+\frac{n+2}{2n(n-1)^{2}}H^{3}\int_{M}[R^{2}f^{3}+3nRf^{2}+2n^{2}f]dV_{g},
\end{eqnarray} which implies
\begin{eqnarray}\label{eqbordoM}
|\partial M|&\geq&\frac{n+2}{2n(n-1)^{2}}H^{3}C_{R},
\end{eqnarray} where $C_{R}$ is a constant given by
$$C_{R}=\int_{M}\big(R^{2}f^{3}+3nRf^{2}+2n^{2}f\big)dV_{g}.$$ Moreover, to see that $C_{R}$ is a positive constant it suffices to use (\ref{eqdeltaf2}) in order to rewrite the last expression as
\begin{eqnarray*}
C_{R}&=&\int_{M}(Rf+n)^{2}fdV_{g}+n(n-1)\int_{M}|\nabla f|^{2} dV_{g},
\end{eqnarray*} which is clearly positive. 

Finally, from (\ref{eqboM0}) we deduce that the equality holds in (\ref{eqbordoM}) if and only if $M^{n}$ is Einstein. Hence, we may apply Theorem 1.1 of  \cite{miaotamTAMS} to conclude that $M^n$ is isometric to a geodesic ball in a simply connected space form $\Bbb{R}^{n},$ $\Bbb{H}^{n}$ or $\Bbb{S}^{n}.$ The proof is completed.
\end{proof}

\subsection{Proof of Theorem~\ref{thmMainB}}

\begin{proof} Firstly, it is easy to verify from (\ref{eqtrace}) that
$$|\nabla f||\partial M|=\frac{R}{n-1}\int_{M}fdV_{g}+\frac{n}{n-1}Vol(M).$$ Since $M^n$ has nonnegative scalar curvature, we have
\begin{equation}\label{auxbordoM}
|\partial M|\geq\frac{nH}{n-1}Vol(M),
\end{equation} where $H$ is the mean curvature of $\partial M$ with respect to the outward unit normal. We then combine Theorem \ref{thmMainA} with (\ref{auxbordoM}) to infer
\begin{eqnarray}\label{auxB}
|\partial M|^{n}&\geq&\frac{n+2}{2n(n-1)^{2}}H^{3}C_{R}|\partial M|^{n-1}\nonumber\\
&\geq&\frac{n+2}{2n(n-1)^{2}}H^{3}C_{R}\left(\frac{nH}{n-1}Vol(M)\right)^{n-1}\nonumber\\
&=&C_{R,H} Vol(M)^{n-1},
\end{eqnarray} where $C_{R,H}$ is a constant given by
$$C_{R,H}=\frac{(n+2)n^{n-2}}{2(n-1)^{n+1}}H^{n+2}C_{R},$$
and $C_{R}$ is defined in Theorem \ref{thmMainA}. In particular, the equality holds in (\ref{auxB}) if and only if the equality holds in (\ref{auxbordoM}) as well as in Theorem \ref{thmMainA}. From this it follows that the scalar curvature must be zero and $(M^{n},g)$ is isometric to a geodesic ball in $\mathbb{R}^{n}.$ This is what we wanted to prove.
\end{proof}

\subsection{Proof of Theorem \ref{thmW}}

\begin{proof} From the Berger curvature decomposition (cf. \cite{WU}, p. 1090) we have
\begin{equation}\label{eqdetWeyl}
Q(W)_{ijkl}^\pm W_{ijkl}=36\det W^\pm.
\end{equation} Moreover, from Lemma 3.2 in \cite{WU} we get
\begin{equation}\label{eqContWeyl}
|\iota_{\nabla f}W^\pm|^2=|W^\pm|^2|\nabla f|^2,
\end{equation} and using this together with (\ref{eqdetWeyl}) in (\ref{keq}) gives
\begin{eqnarray*}
\div(f^2\nabla|W^\pm|^2)&=&\frac{4Rf^2}{3}|W^\pm|^2+\frac{4f}{3}|W^\pm|^2-4f^{2}\langle\left(Ric\odot Ric\right)^\pm,W^\pm\rangle\\
 &&+f\langle \nabla|W^\pm|^2,\nabla f\rangle+6\langle\left(Ric\odot df\otimes df\right)^\pm,W^\pm\rangle\\
 &&+8|\nabla f|^2|W^\pm|^2-144f^2\det W^\pm+2f^2|\nabla W^\pm|^2\nonumber,
 \end{eqnarray*} from which we see the proof is completed.
\end{proof}

\subsection{Proof of Corollary \ref{cor1}}
\begin{proof} Upon integrating the expression obtained in Lemma \ref{Lem2} over $M^{4},$ we use the Stokes's formula and that the Weyl tensor $W$ is trace-free to obtain

\begin{eqnarray}
\label{eq1z}
0&=&\int_{M}\Big(\frac{4}{3}Rf^{2}+\frac{4}{3}f\Big)|W|^{2}dV_{g}-4\int_{M}f^{2}\langle \mathring{Ric}\odot \mathring{Ric},W\rangle dV_{g}\nonumber\\&& +\int_{M}f\langle \nabla |W|^{2},\nabla f\rangle dV_{g} +6\int_{M} \langle \mathring{Ric}\odot df\otimes df, W\rangle dV_{g}+8\int_{M}|\iota_{\nabla f}W|^{2}dV_{g}\nonumber\\&& -4\int_{M}f^{2}\langle Q(W), W\rangle dV_{g}+2\int_{M}f^{2}|\nabla W|^{2} dV_{g}.
\end{eqnarray}

On the other hand, we may use (\ref{eqtrace}) to deduce

\begin{eqnarray*}
{\rm div} \Big(f |W|^{2}\nabla f\Big)&=&f |W|^{2}\Delta f + |W|^{2}|\nabla f|^{2}+f\langle \nabla |W|^{2},\nabla f\rangle \nonumber\\ &=& -\frac{R}{3}f^{2}|W|^{2}-\frac{4}{3}f|W|^{2}+|W|^{2}|\nabla f|^{2}+f\langle \nabla |W|^{2}, \nabla f\rangle.
\end{eqnarray*} Now, on integrating this expression over $M^n$ we get

\begin{equation}
\label{eq2z}
\int_{M}f\langle \nabla |W|^{2}, \nabla f\rangle dV_{g}=\int_{M}\Big(\frac{R}{3}f^{2}+\frac{4}{3}f-|\nabla f|^{2}\Big)|W|^{2}dV_{g}.
\end{equation} Besides, it is not difficult to verify that $$|\ric\odot\ric|^2=8|\ric|^4-8|\ric^2|^2,$$ where $(\ric)_{ij}^2=(\ric)_{ik}(\ric)_{kj};$ for more details see \cite{catinoAdv}. In particular, taking into account that $tr \ric^2=|\ric |^{2},$ we immediately have $|\ric^2|^{2}\geq \frac{|\ric|^{4}}{4},$ which implies

\begin{equation}
\label{eq3z}
|\ric\odot\ric|^2\leq 6|\ric|^4.
\end{equation} Returning to Eq. (\ref{eq1z}), we may use (\ref{eq2z}) and (\ref{eq3z}) to deduce

\begin{eqnarray}
\label{eq4z}
0&\geq&\int_{M}\Big(\frac{5}{3}Rf^{2}+\frac{8}{3}f-|\nabla f|^{2}\Big)|W|^{2}dV_{g}-4\sqrt{6}\int_{M}f^{2}| \mathring{Ric}|^{2}|W| dV_{g}\nonumber\\&& +6\int_{M} \langle \mathring{Ric}\odot df\otimes df, W\rangle dV_{g}+8\int_{M}|\iota_{\nabla f}W|^{2}dV_{g}\nonumber\\&&-4\int_{M}f^{2}\langle Q(W), W\rangle dV_{g}+2\int_{M}f^{2}|\nabla W|^{2} dV_{g}\nonumber\\&&\int_{M}\Big(\frac{5}{3}Rf^{2}+\frac{8}{3}f+7|\nabla f|^{2}\Big)|W|^{2}dV_{g}-4\sqrt{6}\int_{M}f^{2}| \mathring{Ric}|^{2}|W| dV_{g}\nonumber\\&& +6\int_{M} \langle \mathring{Ric}\odot df\otimes df, W\rangle dV_{g}+2\int_{M}f^{2}|\nabla W|^{2} dV_{g}\nonumber\\&&-16\int_{M}f^{2}|W|^{3}dV_{g},
\end{eqnarray} where we used the elementary inequality  $|\langle Q(W),W\rangle|\leq 4|W|^{3},$ along with the identity $|\iota_{\nabla f}W|^{2}=|W|^{2}|\nabla f|^{2};$ cf. Lemma 3.2 in \cite{WU}.

In order to conclude it therefore suffices to use our assumption to infer

\begin{equation}
\label{klmn}
\int_{M}\Big(\frac{5}{3}Rf^{2}+\frac{8}{3}f+7|\nabla f|^{2}\Big)|W|^{2}dV_{g}=0.
\end{equation} Hence, since $f$ and $g$ are analytic (cf. \cite[Proposition 2.1]{CEM}), Eq. (\ref{klmn}) forces $M^4$ to be locally conformally flat, and we are in position to use Theorem 1.2 of \cite{miaotamTAMS} to conclude that $M^4$ is isometric to a geodesic ball in a simply connected space form $\mathbb{R}^{4}$ or $\mathbb{S}^{4}.$ This completes the proof of Corollary \ref{cor1}.

\end{proof}

\begin{acknowledgement}
The authors want to thank the referee for his careful reading, relevant remarks and valuable suggestions. Moreover, the authors want to thank R. Batista and P. Wu for helpful conversations about this subject.
\end{acknowledgement}

\end{document}